\documentclass{amsart}
\usepackage[utf8]{inputenc}

\usepackage{graphics}
\usepackage{thmtools}
\usepackage{wasysym}
\usepackage[T1]{fontenc}    

\usepackage{amsthm}
\usepackage{amsbsy,amsmath,amssymb,amscd,amsfonts}
\usepackage[pagebackref=true]{hyperref}

\usepackage{graphicx,float,latexsym,color}
\usepackage[font={scriptsize,it}]{caption}
\usepackage{subcaption}

\usepackage{makecell}
\renewcommand{\arraystretch}{1.2}

\usepackage[dvipsnames]{xcolor}

\newtheorem{theorem}{Theorem}
\newtheorem*{theorem*}{Theorem}

\newtheorem{proposition}{Proposition}

\newtheorem{corollary}{Corollary}
\newtheorem{lemma}{Lemma}
\theoremstyle{remark}
\newtheorem{remark}{Remark}
\theoremstyle{definition}

\hypersetup{
    pdftoolbar=true,        
    pdfmenubar=true,        
    pdffitwindow=false,     
    pdfstartview={FitH},    
    colorlinks=true,       
    linkcolor=OliveGreen,          
    citecolor=blue,        
    filecolor=black,      
    urlcolor=red           
}

\usepackage{lineno}

\arraycolsep=2pt
\captionsetup{width=120mm}

\usepackage{comment}
\usepackage{microtype}
\usepackage{footnote}

\title[Related by Similarity II: Homothetic Pair and Brocard Porism]{Related by Similarity II: Poncelet 3-Periodics in\\the Homothetic Pair and the Brocard Porism}
\author{Dan Reznik} 
\author{Ronaldo Garcia} 
\date{September, 2020}

\begin{document}

\maketitle

\begin{abstract}
Previously we showed the family of 3-periodics in the elliptic billiard (confocal pair)  is the image under a variable similarity transform of poristic triangles (those with non-concentric, fixed incircle and circumcircle). Both families conserve the ratio of inradius to circumradius and therefore also the sum of cosines. This is consisten with the fact that a similarity preserves angles. Here we study two new Poncelet 3-periodic families also tied to each other via a variable similarity: (i) a first one interscribed in a pair of concentric, homothetic ellipses, and (ii) a second non-concentric one known as the Brocard porism: fixed circumcircle and Brocard inellipse. The Brocard points of this family are stationary at the foci of the inellipse. A key common invariant is the Brocard angle, and therefore the sum of cotangents. This raises an interesting question: given a non-concentric Poncelet family (limited or not to the outer conic being a circle), can a similar doppelgänger always be found interscribed in a concentric, axis-aligned ellipse and/or conic pair?

\vskip .3cm
\noindent\textbf{Keywords} Poncelet, Brocard, Homothetic, Porism, Confocal, Billiard
\vskip .3cm
\noindent \textbf{MSC} {53A04 \and 51M04 \and 51N20}
\end{abstract}

\section{Introduction}
\label{sec:intro}
Previously we studied invariants in the so-called Poristic triangle family (fixed incircle and circumcircle) \cite{gallatly1914-geometry} in relation to 3-periodics in the elliptic billiard \cite{garcia2020-poristic}. We found both these families were images of one another under a variable similarity transform. By definition, poristic triangles conserve inradius $r$ and circumradius $R$; we showed billiard triangles conserve $r/R$ \cite{garcia2020-new-properties,reznik2020-intelligencer}. Since $r/R=1+\sum{\cos\theta_i}$, both conserve the sum of cosines.

Here we study a new duo of Poncelet 3-periodics families which are also related by a similarity and also share a common invariant (the sum of cotangents). These arise as follows:

\begin{enumerate}
\item The homothetic pair: an external ellipse with semi-axes $(a,b)$, and an internal, concentric, axis-aligned one with semi-axes $(a',b')=(a/2,b/2)$. These are known as the Steiner circum- and inellipse, respectively. 
\item Brocard Porism: a fixed circumcircle and a caustic known as the Brocard Inellipse \cite[Brocard Inellipse]{mw}.
\end{enumerate}

While the first one preserves area and its barycenter is stationary (it is an affine image of the concentric circular pair) the second one conserves the Brocard angle $\omega$; see Figure~\ref{fig:brocard-basic}. This family is remarkable as its Brocard points \cite[Brocard Points]{mw} are stationary at the inellipse foci.

\begin{figure}
    \centering
    \includegraphics[width=.7\textwidth]{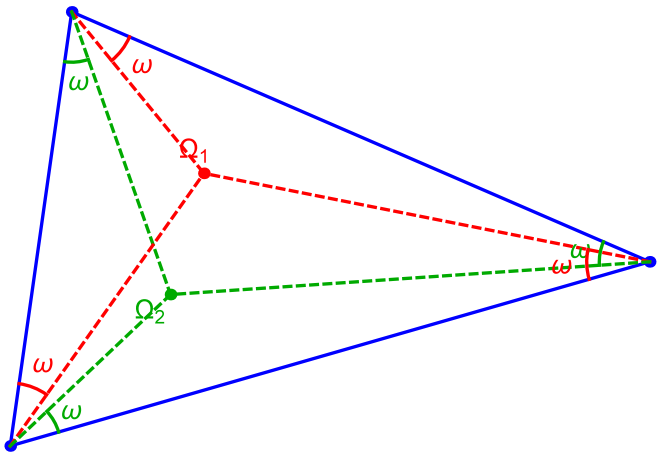}
    \caption{For every triangle two Brocard Points exist where sides ${P_i}{P_{i+1}}$ concur when rotated an angle $\omega$ about $P_i$ toward the triangle's interior. If vertices are traversed counterclockwise (resp. clockwise) one obtains $\Omega_1$ (resp. $\Omega_2$).
    }
    \label{fig:brocard-basic}
\end{figure}

\subsection*{Main results} we (i) prove the homothetic family conserves $\omega$. We then (ii) describe a 2d parametrization for the family of triangles with fixed Brocard points in termos of the axes of the Brocard inellipse. Finally we show that (iii) the homothetic and Brocard-Poristic families are similar. Since this transform is angle-preserving, it is consistent with the fact that both families conserve Brocard angle. In turn this implies both conserve the sum of cotangents, since $\cot\omega=\sum{\cot\theta_i}$ \cite[Brocard Angle, Eq. 1]{mw}.

\subsection*{Related Work}

Properties and relations involving the Brocard points have been widely studied \cite{coolidge1971,gallatly1914-geometry,honsberger1995,shail1996-brocard}.

Johnson terms {\em equibrocardal} families which conserve $\omega$ and proves that the projection of a family of equilaterals embedded in a tilted plane onto the horizontal plane is equibrocardal \cite[Chapter XVII]{johnson1960}. Pamfilos describes equibrocardal isosceles triangles arising from projections on the Lemoine axis  \cite{pamfilos2004}. 

Bradley studies the locus of triangle centers over the Brocard porism in \cite{bradley2007-brocard} and defines conics associated with the Brocard points and porism in \cite{bradley2011-brocard}. A construction for the Brocard porism is given in \cite[Theorem 4.20, p. 129]{akopyan2007-conics}.

\subsection*{Structure of the Paper}

In Section~\ref{sec:homothetic} (resp. Section~\ref{sec:brocard-porism}) we describe 3-periodic invariants associated with the homothetic pair (resp. Brocard porism). In Section~\ref{sec:similarity} we prove that both families are images of one another under a variable similarity transform. Section~\ref{sec:conclusion} presents a summary of our findings as well as a few open questions and a list of videos of some phenomena covered herein.

Appendices are included which provide (i) explicit expressions for Brocard porism vertices, (ii) a construction for the complete (2d) family of triangles with fixed Brocard points, and (iii) explicit expressions for the locus of key triangle centers in either family. 

\section{Homothetic Poncelet}
\label{sec:homothetic}
Note: when referring to triangle centers below, we will adopt the $X_k$ notation after \cite{etc}, e.g., $X_2$ is the barycenter, $X_3$ the circumcenter, etc.

Consider the family of Poncelet 3-periodics in a {\em homothetic pair}, i.e., inscribed in an ellipse $(a,b)$ and circumscribed about a concentric, axis-aligned, half-sized ellipse with semi-axes $(a',b')=(a/2,b/2)$. Notice this pair satisfies a condition for the existence of a 3-periodic Poncelet family in a concentric, axis-aligned ellipse pair, namely \cite{georgiev2012-poncelet}:

\begin{equation}
    \frac{a'}{a}+\frac{b'}{b}=1
    \label{eqn:pair-n3}
\end{equation}

Referring to Figure~\ref{fig:first-brocard-locus}:

\begin{lemma}
The family of 3-periodics in the homothetic pair conserves both area and sum of sidelengths squared.
\end{lemma}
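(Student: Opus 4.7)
The plan is to exploit the fact that the homothetic pair is the affine image of a concentric circular pair in which the inner circle has exactly half the radius of the outer one. Concretely, the linear map $T(x,y)=(ax/R,\,by/R)$ sends a circle of radius $R$ to the outer ellipse of semi-axes $(a,b)$ and the concentric half-radius circle to the inner ellipse of semi-axes $(a/2,b/2)$. Because $T$ is a linear bijection, it carries the Poncelet 3-periodic family of the circular pair onto that of the homothetic pair.

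In the circular pair, $r/R=1/2$ forces every 3-periodic to be equilateral (this is the classical $N=3$ closure case). Hence one may parametrize the homothetic family as
\begin{equation*}
P_i=(a\cos\theta_i,\;b\sin\theta_i),\qquad \theta_i=\theta+\tfrac{2\pi i}{3},\quad i=0,1,2,
\end{equation*}
with $\theta$ the single free parameter. Invariance of area follows immediately: $T$ scales areas by the constant factor $\det T=ab/R^2$, and all pre-image equilaterals share the common area $\tfrac{3\sqrt3}{4}R^2$, so every 3-periodic of the homothetic family has area $\tfrac{3\sqrt3}{4}ab$.

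For the sum of squared sidelengths, I would just compute
\begin{equation*}
\sum_{i=0}^{2}|P_i-P_{i+1}|^2=a^2\!\sum_i(\cos\theta_i-\cos\theta_{i+1})^2+b^2\!\sum_i(\sin\theta_i-\sin\theta_{i+1})^2.
\end{equation*}
Expanding and using product-to-sum identities, each cross term splits into a piece depending only on $\theta_i-\theta_{i+1}=-2\pi/3$ (constant) and a piece of the form $\cos(2\theta+\text{phase})$ whose three cyclic values sum to zero by the third-roots-of-unity identity $\sum_{i=0}^2 e^{2\pi i k/3}=0$ for $k\not\equiv0\pmod 3$. A short calculation then yields $\sum_i(\cos\theta_i-\cos\theta_{i+1})^2=9/2$ and likewise for sines, so $\sum|P_i-P_{i+1}|^2=\tfrac{9}{2}(a^2+b^2)$, independent of $\theta$.

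There is no real obstacle here beyond bookkeeping: the conceptual work is entirely absorbed by the observation that the homothetic pair is an affine picture of equilaterals in a circle, and the rest is a two-line trigonometric sum killed by the vanishing of $\sum_i e^{4\pi i k/3}$ for $k=\pm1$. The only thing to be a bit careful about is that the affine map $T$ does \emph{not} preserve sidelengths, so the sum-of-squares invariance must be verified by direct computation rather than inherited from the circular case (unlike area, which is scaled by a global constant).
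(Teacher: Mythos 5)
Your proof is correct and takes essentially the same route as the paper's: area invariance comes from the family being the affine image of equilaterals in the concentric circular pair, and invariance of $\sum s_i^2$ comes from the vanishing of the degree-1 and degree-2 harmonics when summed over the third roots of unity — you execute this as an explicit coordinate computation, while the paper phrases it abstractly via averaging $|\mathcal{A}v(\alpha)|^2$ over $\mathbb{Z}_3$ (a formulation that extends to all $N$). Note also that your area value $\tfrac{3\sqrt{3}}{4}ab$ is the correct one — the paper's Remark listing $\tfrac{3\sqrt{3}}{2}ab$ carries a spurious factor of $2$, as one checks against $\cot\omega=\sum s_i^2/(4A)$ and the formula of Theorem~\ref{thm:sum-cots}.
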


\begin{figure}
    \centering
\includegraphics[width=\textwidth]{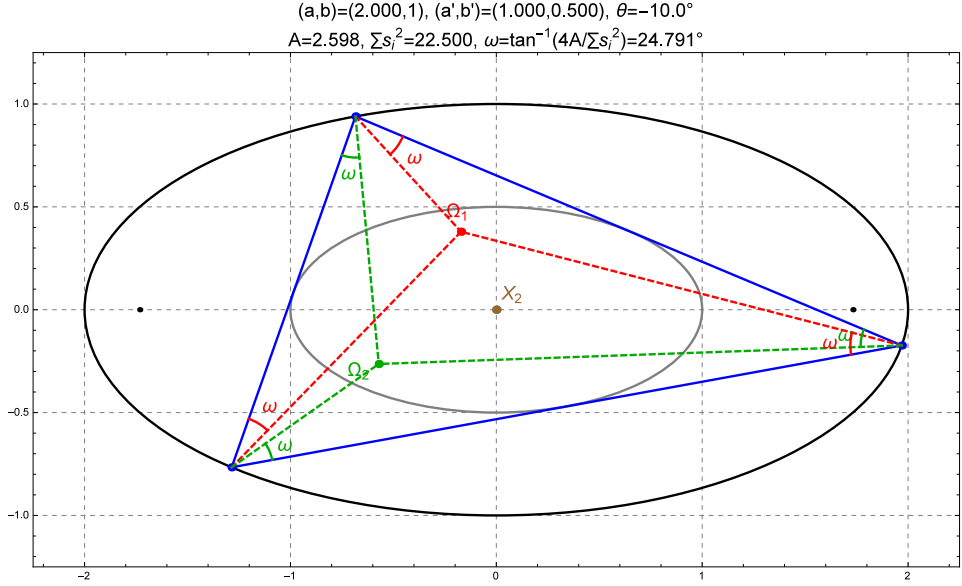}
    \caption{Area and sum of squared sidelengths of 3-periodics (blue) in the homothetic Poncelet pair (barycenter $X_2$ stationary at the origin) are invariant, and therefore so is the Brocard angle $\omega$. \href{https://youtu.be/2fvGd8wioZY}{Video}}
    \label{fig:first-brocard-locus}
\end{figure}

The proof below was kindly contributed by S. Tabachnikov \cite{sergei2020-private-sidelengths}.

\begin{proof}
Area conservation stems from the fact that the family is the affine image of 3-periodics in a concentric circular Poncelet pair. Invariant sum of squared sidelengths follows from the fact that the average of the harmonics of degree 1 and 2 over the group of rotations of order 3 is zero. Namely, consider a unit vector $v(\alpha)=(\cos \alpha, \sin \alpha)$ and a matrix $\mathcal{A}$ taking concentric circles to homothetic ellipses. Then $|\mathcal{A}v(\alpha)|^2$ is a trigonometric polynomial of degree 2. Average it over $\mathbb{Z}_3$ by adding $2\pi/3$ and $4\pi/3$ to $\alpha$. The result is independent of $\alpha$, as needed.
\end{proof}

\begin{remark}
The invariant area and sum of squared sidelengths of 3-periodics in the homothetic pair are given by:
\[ A=\frac{3\sqrt{3}ab}{2}, \;\;\;\;\; \sum{s_i}^2=\frac{9}{2}(a^2+b^2)\cdot\]
\end{remark}

\begin{theorem}
3-periodics in the homothetic pair are equibrocardal, i.e., they conserve Brocard angle $\omega$ given by: 
\[\cot\omega=\frac{\sqrt{3}(a^2+b^2)}{2ab}.\]
\label{thm:sum-cots}
\end{theorem}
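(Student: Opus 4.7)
The plan is to reduce the statement to the preceding lemma by way of the classical Brocard identity
$$\cot\omega \;=\; \frac{s_1^2+s_2^2+s_3^2}{4\,A}.$$
Once this identity is in hand, the lemma immediately implies equibrocardality, and a substitution of the explicit invariants yields the stated value of $\cot\omega$.

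To establish the identity I would begin from the relation $\cot\omega = \sum_i \cot\theta_i$ cited in the introduction. Combining the law of cosines $\cos\theta_i = (s_{i-1}^2 + s_{i+1}^2 - s_i^2)/(2\,s_{i-1} s_{i+1})$ (with $s_i$ denoting the side opposite vertex $i$) with the area identity $\sin\theta_i = 2A/(s_{i-1} s_{i+1})$ gives
$$\cot\theta_i \;=\; \frac{s_{i-1}^2 + s_{i+1}^2 - s_i^2}{4A},$$
and summing over $i=1,2,3$ telescopes to the displayed identity.

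The final step is a substitution. Inserting $\sum s_i^2 = \tfrac{9}{2}(a^2+b^2)$ from the lemma together with $A = \tfrac{3\sqrt{3}}{4}\,ab$ (obtained as the affine image under $(x,y)\mapsto(ax,by)$ of the standard equilateral triangle inscribed in the unit circle) and simplifying yields $\cot\omega = \sqrt{3}(a^2+b^2)/(2ab)$, as claimed. I foresee no serious obstacle: the substantive content is already packaged in the previous lemma, and what remains is a one-line trigonometric identity followed by an algebraic simplification.
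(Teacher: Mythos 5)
Your proof takes essentially the same route as the paper: it invokes the identity $\cot\omega=\sum s_i^2/(4A)$ together with the preceding lemma's conservation of area and of $\sum s_i^2$, merely adding an explicit derivation of that identity and carrying out the final substitution. One remark in your favor: the area you compute, $A=\tfrac{3\sqrt{3}}{4}ab$, is the value consistent with the stated formula for $\cot\omega$, whereas the paper's Remark lists $A=\tfrac{3\sqrt{3}}{2}ab$, which appears to be off by a factor of $2$.
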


\begin{proof}
A known relation is $\cot(\omega)=\sum(s_i)^2/(4A)$ \cite[Brocard Angle]{mw}. Since both numerator and denominator are conserved the result follows.
\end{proof}

Surprisingly, the sum of cotangents is invariant for $N$-periodics in the concentric, homothetic pair for any $N$ \cite{reznik2020-percolating}.

Theorem~\ref{thm:sum-cots} is also a direct consequence to a beautiful Theorem by Johnson \cite[Theorem 487, p. 291]{johnson1960}, namely that if two equilateral triangles in an oblique plane are projected onto a horizontal one, the projected triangles will have the same Brocard angle. Johnson's projection can be regarded as the affine transformation that takes Poncelet equilaterals interscribed between two concentric circles into the homothetic pair. Let $\varphi$ denote the angle between the oblique planes and the horizontal. Johnson derives \cite[p. 292]{johnson1960}:

\[ \cot\omega = \frac{\sqrt{3}}{2}\left(\cos\varphi+\frac{1}{\cos\varphi}\right) \]

We show elsewhere \cite{garcia2020-brocard-loci} that over the homothetic family, the loci of the Brocard points are two ellipses rotated an equal in opposite directions about the $x$-axis. Furthermore, these ellipses are concentric and similar to the Poncelet pair ellipses.

\section{Brocard Porism}
\label{sec:brocard-porism}
A remarkable porism is known \cite{bradley2011-brocard,bradley2007-brocard,shail1996-brocard} for a family of triangles circumscribed in a circle and circumscribing the so-called {\em Brocard inellipse} \cite{bradley2007-brocard}. Remarkably, the family is {\em equibrocardal} (conserves $\omega$) (a term from \cite{johnson1960}) {\em and} their Brocard points are stationary at the foci of the inellipse.

Given a triangle, the outer conic is the circumcircle. The inner one is the one centered on $X_{39}$ with Brianchon point is $X_6$, i.e., the touchpoints are the intersections of the cevians through $X_6$ (the symmedians) with the sidelines \cite[Brocard Inellipse]{mw}.

Assume the inellipse is centered at $(0,0)$ and its semi-axes are $(a,b)$. 

\begin{proposition}
The circumcenter $X_3$, circumradius $R$, and Brocard angle $\omega$ of the Brocard porism are given by:

\begin{equation}
X_3=[0,-\frac{c\delta_1}{b}],\;\;\;R= \frac{2a^2}{b}, \;\;\; \cot\omega=  \frac{\delta_1}{b} \\
 \label{eqn:broc-circumcircle}
\end{equation}
where $\delta_1=\sqrt{4a^2-b^2}$.
\label{prop:cotw}
\end{proposition}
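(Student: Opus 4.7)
The plan is to extract the three claimed quantities from two ingredients: the classical distance formulas
\[ |X_3\Omega_i|^2 = R^2(1-4\sin^2\omega), \qquad |\Omega_1\Omega_2| = 2R\sin\omega\sqrt{1-4\sin^2\omega}, \]
together with the identification of the Brocard inellipse's foci with $\Omega_1,\Omega_2$ and of its center with their midpoint $X_{39}$. Since $|X_3\Omega_1|=|X_3\Omega_2|$ holds in every triangle, the circumcenter lies on the perpendicular bisector of $\Omega_1\Omega_2$; the porism hypothesis places the inellipse at the origin with foci $(\pm c, 0)$, $c=\sqrt{a^2-b^2}$, so one can immediately write $X_3=(0,y_3)$ for a scalar $y_3$ to be determined.

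Next I would establish (or invoke) the classical expressions for the Brocard inellipse's semi-axes in terms of the porism's $R$ and $\omega$:
\[ a = R\sin\omega, \qquad b = 2R\sin^2\omega. \]
These can be derived by writing the Brocard inellipse as the unique conic through the three symmedian-foot contact points of any member of the porism and simplifying via the Law of Sines; the equilateral check $\omega=\pi/6$ recovers $a=b=R/2$, the incircle. Inverting is a one-line division: $b/a = 2\sin\omega$, whence $\sin\omega = b/(2a)$, $\cos\omega = \delta_1/(2a)$, and $\cot\omega=\delta_1/b$; back-substitution gives $R = a/\sin\omega = 2a^2/b$. This establishes the middle and right formulas of \eqref{eqn:broc-circumcircle}.

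For $y_3$, I would observe that $1-4\sin^2\omega = 1-b^2/a^2 = c^2/a^2$, so the first distance formula above becomes $|X_3\Omega_1|^2 = R^2 c^2/a^2 = 4a^2c^2/b^2$. Combining with the Pythagorean identity $|X_3\Omega_1|^2 = c^2 + y_3^2$ on the right triangle $X_3$--$X_{39}$--$\Omega_1$ yields $y_3^2 = c^2(4a^2-b^2)/b^2 = c^2\delta_1^2/b^2$. The negative sign $y_3=-c\delta_1/b$ is a convention matching the figure's orientation.

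The main obstacle is the self-contained derivation of $a=R\sin\omega$ and $b=2R\sin^2\omega$: the elementary relation $2c=|\Omega_1\Omega_2|$ supplies only one equation in $(R,\omega)$, so a second, independent relation must come from the inellipse's tangency to a side. The cleanest self-contained route is to perform the computation on the unique isoceles member of the porism—whose axis of symmetry is the $y$-axis, automatically aligning the inellipse's axes with the coordinate axes—and read off both semi-axes from the symmedian-tangency points on that triangle; this reduces the task to a single trigonometric identity.
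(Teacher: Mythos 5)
Your proposal is correct in substance, and it takes a route the paper does not even attempt: the paper states Proposition~\ref{prop:cotw} with no proof at all (the formulas are evidently meant to be verified by CAS against the explicit vertices of Appendix~\ref{app:brocard-vertices}, or imported from Shail and Bradley), whereas you assemble all three formulas from the classical identities $|X_3\Omega_i|=R\sqrt{1-4\sin^2\omega}$ and $|\Omega_1\Omega_2|=2R\sin\omega\sqrt{1-4\sin^2\omega}$ together with the focal characterization of the Brocard inellipse. Your algebra checks out: $\sin\omega=b/(2a)$ yields $\cot\omega=\delta_1/b$ and $R=2a^2/b$; the identity $1-4\sin^2\omega=c^2/a^2$ converts the circumcenter-to-Brocard-point distance into $y_3^2=c^2\delta_1^2/b^2$; and placing $X_3$ on the minor axis is correctly justified by $|X_3\Omega_1|=|X_3\Omega_2|$. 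The only real debt is the pair $a=R\sin\omega$, $b=2R\sin^2\omega$, which you rightly identify as the crux but leave as a sketch; it closes in a few lines with no need for the isosceles member or a tangency computation. For a conic with foci $\Omega_1,\Omega_2$, the squared semi-minor axis equals the product of the focal distances to any tangent line; since the two Brocard points have reciprocal trilinears ($s_3/s_2:s_1/s_3:s_2/s_1$ and $s_2/s_3:s_3/s_1:s_1/s_2$ in terms of the sidelengths $s_i$), this product is the same for all three sides and equals $\left(2\Delta\,s_1s_2s_3/\Gamma\right)^2$ with $\Gamma=s_1^2s_2^2+s_2^2s_3^2+s_3^2s_1^2$; using $\sin\omega=2\Delta/\sqrt{\Gamma}$ and $s_1s_2s_3=4R\Delta$ this is $(2R\sin^2\omega)^2$, and then $a^2=b^2+(|\Omega_1\Omega_2|/2)^2=R^2\sin^2\omega$ gives the major semi-axis. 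With that supplement your argument is a complete, coordinate-free proof --- which is more than the paper itself provides for this statement.
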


Derivable from the above is a known requirement for the Brocard porism to be possible \cite[Eqs. 15--17]{shail1996-brocard}:

\begin{remark}
$R{\geq}2c$
\label{rem:minR}
\end{remark}

\begin{corollary}
\[ c = R\sin{\omega}\sqrt{1-4\sin^2{\omega}} \]
\end{corollary}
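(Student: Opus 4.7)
The plan is to derive this corollary as a straightforward algebraic consequence of Proposition \ref{prop:cotw}, treating $c$ as the linear eccentricity of the Brocard inellipse (so that $c^2 = a^2 - b^2$, since the inellipse has semi-axes $(a,b)$ with $a\geq b$).

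First I would use the formula $\cot\omega = \delta_1/b = \sqrt{4a^2-b^2}/b$ to eliminate $a$ in favor of $\omega$ and $b$. Squaring gives $b^2\cot^2\omega = 4a^2 - b^2$, hence $b^2(\cot^2\omega+1) = 4a^2$. Using $\cot^2\omega + 1 = 1/\sin^2\omega$, this simplifies to the clean relation
\[ b = 2a\sin\omega. \]

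Next I would feed this into $R = 2a^2/b$ to obtain $R = 2a^2/(2a\sin\omega) = a/\sin\omega$, i.e. $a = R\sin\omega$, and therefore $b = 2R\sin^2\omega$. At this stage both semi-axes of the inellipse have been expressed in terms of $R$ and $\omega$ alone.

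Finally I would compute
\[ c^2 = a^2 - b^2 = R^2\sin^2\omega - 4R^2\sin^4\omega = R^2\sin^2\omega\bigl(1 - 4\sin^2\omega\bigr), \]
and take square roots to conclude $c = R\sin\omega\sqrt{1-4\sin^2\omega}$. Positivity of the radicand is exactly the content of Remark~\ref{rem:minR} ($R \geq 2c$), so no case analysis is needed. There is no real obstacle here: the whole proof is two substitutions and a difference of squares, and the only thing to watch is the identification of $c$ with $\sqrt{a^2-b^2}$ for the inellipse.
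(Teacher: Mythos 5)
Your derivation is correct, but it takes a different route from the paper. The paper obtains the corollary by citing Shail's relation $|\Omega_1-\Omega_2|^2 = 4R^2\sin^2\omega\,(1-4\sin^2\omega)$ and combining it with the fact that the Brocard points sit at the foci of the inellipse, so that $|\Omega_1-\Omega_2| = 2c$; the identity then falls out immediately (the paper leaves the verification that Shail's expression reduces to $4c^2$ unstated). You instead bypass Shail entirely and deduce everything from Proposition~\ref{prop:cotw}: eliminating $a$ via $\cot^2\omega+1=1/\sin^2\omega$ gives $b=2a\sin\omega$, then $R=2a^2/b$ gives $a=R\sin\omega$ and $b=2R\sin^2\omega$, and $c^2=a^2-b^2$ finishes it. This is self-contained within the paper's own results, and the intermediate formulas $a=R\sin\omega$ and $b=2R\sin^2\omega$ are a genuine bonus (they match the Gallatly and Moses circle radii in Appendix~\ref{app:broc-circles}); the paper's route, by contrast, buys a conceptual explanation (the corollary is literally the statement that the inter-Brocard distance equals the focal distance) at the cost of importing an external result. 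One small quibble: nonnegativity of the radicand $1-4\sin^2\omega$ is not the content of Remark~\ref{rem:minR}. In your own computation it is equivalent to $4\sin^2\omega = b^2/a^2 \le 1$, i.e.\ to the standing assumption $b\le a$ on the inellipse (equivalently the classical bound $\omega\le\pi/6$), whereas $R\ge 2c$ is a further consequence, since $16u^2-4u+1\ge 0$ for $u=\sin^2\omega$ has negative discriminant and so holds identically. This does not affect the validity of the proof.
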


This stems from a beautiful relation presented by Shail \cite{shail1996-brocard} whereby the distance between the Brocard points is given by:

\[
|{\Omega_1}-{\Omega_2}|^2=4 R^2\sin^2{\omega}(1-4\sin^2{\omega})
\]

It be shown that the above reduces to $4 c^2$.

Explicit expressions for the Brocard vertices appear in Appendix~\ref{app:brocard-vertices}. A few configurations of the family are depicted in Figure~\ref{fig:brocard-parametric}. 

\begin{figure}
    \centering
    \includegraphics[width=\textwidth]{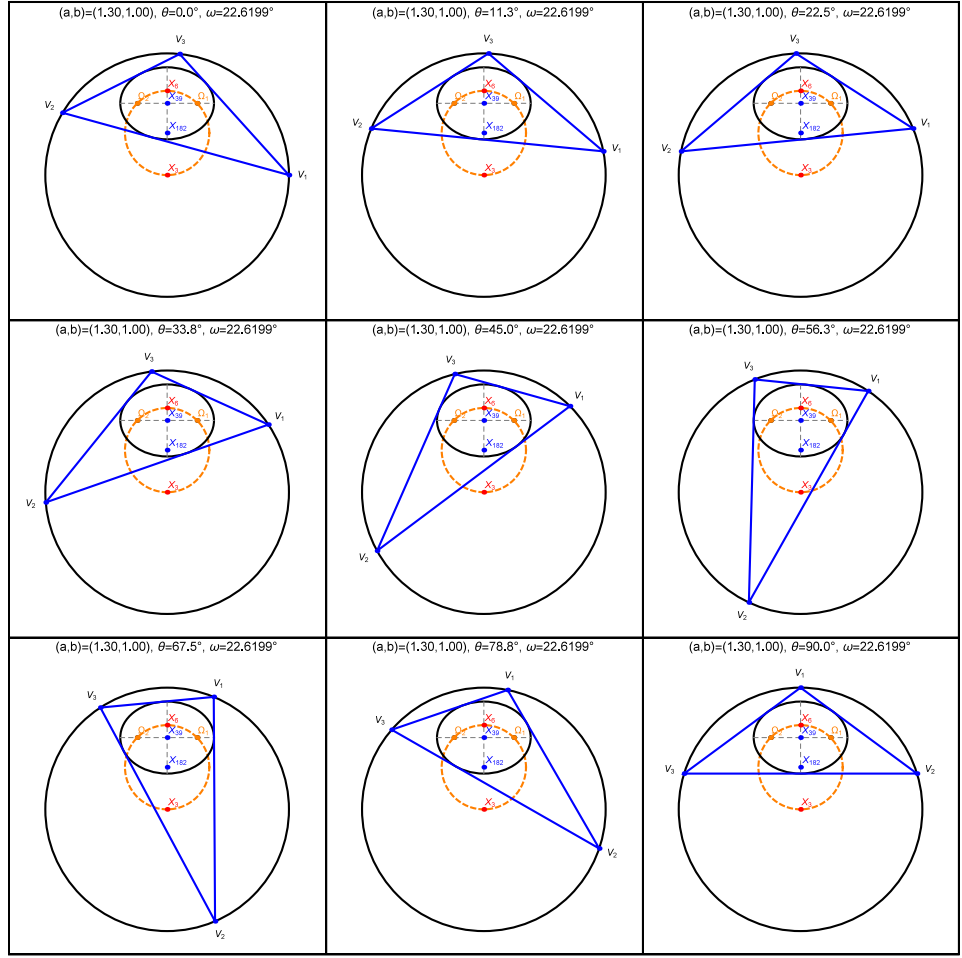}
    \caption{Nine positions of the 3-periodic Poncelet family inscribed in a circle (black) and circumscribing an ellipse (black). All triangles have the same Brocard angle $\omega$ and stationary Brocard point $\Omega_1$ and $\Omega_2$ which lie at the foci of the (Brocard) inellipse. Their midpoint $X_{39}$ is at the center of the inellipse. Triangle centers $X_3$ and $X_6$ are stationary and are concyclic with the Brocard points on the Brocard circle (dashed orange) whose center is $X_{182}$.}
    \label{fig:brocard-parametric}
\end{figure}


\subsection{All triangles with fixed Brocard Points}

As seen above, the Brocard porism gives rise to a 1d family of triangles whose Brocard points coincide with the foci of the caustic, i.e., the Brocard inellipse.

Consider the 1-parameter family of origin-centered confocal ellipses $\mathcal{E}_\lambda$:

\[\mathcal{E}_\lambda:\;\frac{x^2}{a^2-\lambda}+\frac{y^2}{b^2-\lambda}-1=0,\;\; \lambda<b^2,  \;\; 0<b<a.\]

\begin{remark}
The circumcenter $X_{3,\lambda}$, circumradius $R_\lambda$, and Brocard angle $\omega_\lambda$ for each family implied by a choice of $a,b,\lambda$ are given by: 

\begin{equation*}
     X_{3,\lambda}=\left[0,- \frac {c\, \delta_2}{\sqrt {{b}^{2}-
\lambda}}\right],\;\;\;
     R_\lambda= \frac{ 2(a^2-\lambda)}{ \sqrt{b^2-\lambda}},\;\;\;\cot\omega_\lambda=   \frac{\delta_2}{\sqrt{b^2-\lambda}} 
\end{equation*}
\end{remark}

\noindent where $\delta_2=\sqrt{4a^2-b^2-3\lambda}$.
This stems from Proposition~\ref{prop:cotw}: replace $a$ with $\sqrt{a^2-\lambda}$ and $b$ with $\sqrt{b^2-\lambda}$ and obtain expressions for t

\begin{theorem}
The family of triangles circumscribing $\mathcal{E}_\lambda$ and inscribed in a circle of radius $R_k$ centered on $X_{3,\lambda}$ has fixed Brocard points on the foci $[\pm{c},0]$ of $\mathcal{E}_\lambda$. Moreover, varying $\lambda\in(-\infty,b^2)$ covers the entire 2d family of triangles with Brocards points on $[\pm{c},0]$.
\end{theorem}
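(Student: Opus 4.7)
The plan is to split the claim into two parts: (i) every triangle produced by the $\mathcal{E}_\lambda$-construction has Brocard points at the common foci $[\pm c,0]$; (ii) every triangle with Brocard points at $[\pm c,0]$ arises this way for a unique $\lambda$.

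Part (i) is essentially a book-keeping consequence of Section~\ref{sec:brocard-porism}. Proposition~\ref{prop:cotw} together with the substitution $a\mapsto \sqrt{a^2-\lambda}$, $b\mapsto\sqrt{b^2-\lambda}$ used to obtain the preceding Remark shows that, for each admissible $\lambda$, the Poncelet family inscribed in the circle of radius $R_\lambda$ centered at $X_{3,\lambda}$ and circumscribing $\mathcal{E}_\lambda$ is a Brocard porism on $\mathcal{E}_\lambda$. By the defining property of that porism (its Brocard points are stationary at the foci of the inellipse), those Brocard points sit at the foci of $\mathcal{E}_\lambda$. Because $\mathcal{E}_\lambda$ is a confocal family, $c=\sqrt{a^{2}-b^{2}}$ is invariant in $\lambda$, so the Brocard points are $[\pm c,0]$ for every $\lambda\in(-\infty,b^2)$.

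For part (ii), take an arbitrary triangle $T$ whose Brocard points are $[\pm c,0]$. The characterizing geometric property of the Brocard inellipse is that its two foci are exactly the Brocard points of the triangle; in particular its center is their midpoint, namely $X_{39}=(0,0)$, and its foci are $[\pm c,0]$. Consequently the Brocard inellipse of $T$ is a proper, origin-centered, axis-aligned ellipse confocal with $\mathcal{E}_{\lambda}$, and so equals $\mathcal{E}_{\lambda_0}$ for a unique $\lambda_0\in(-\infty,b^2)$. Now $T$ circumscribes $\mathcal{E}_{\lambda_0}$ and is inscribed in its own circumcircle $\mathcal{C}_T$, so Poncelet's closure theorem upgrades $T$ to a full 1-parameter Poncelet 3-periodic family sharing $(\mathcal{C}_T,\mathcal{E}_{\lambda_0})$. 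This family is a Brocard porism, so its circumcircle is uniquely prescribed by $\mathcal{E}_{\lambda_0}$, hence $\mathcal{C}_T$ must coincide with the circle of radius $R_{\lambda_0}$ centered at $X_{3,\lambda_0}$ given by the Remark, and $T$ lies in the $\lambda_0$-subfamily.

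A dimension check confirms nothing is missed: triangles have $6$ real degrees of freedom, and fixing two Brocard points kills $4$, leaving a $2$-parameter moduli space that matches the $(\lambda,\text{Poncelet-flow})$ parametrization. The delicate step is part (ii), and within it the invocation of the Brocard inellipse characterization (foci $=$ Brocard points); once that is granted, Poncelet closure and uniqueness of the Brocard porism circumcircle for a prescribed inellipse close the argument cleanly, with no further calculation required.
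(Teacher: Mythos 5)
Your proposal is correct, and its first half is essentially the paper's entire proof: the paper disposes of the theorem with the single line ``Follows from Proposition~\ref{prop:cotw}'', i.e.\ it relies on the substitution $a\mapsto\sqrt{a^2-\lambda}$, $b\mapsto\sqrt{b^2-\lambda}$ and the stationarity of the Brocard points at the inellipse foci, exactly as in your part~(i). Where you genuinely diverge is part~(ii): the paper never argues surjectivity (that \emph{every} triangle with Brocard points at $[\pm c,0]$ occurs for some $\lambda$), whereas you supply the missing chain --- the Brocard inellipse is the unique inconic whose foci are the (isogonally conjugate) Brocard points, hence for such a triangle it is an origin-centered member $\mathcal{E}_{\lambda_0}$ of the confocal family; Poncelet closure then embeds the triangle in the porism on $(\mathcal{C}_T,\mathcal{E}_{\lambda_0})$, whose circumcircle Proposition~\ref{prop:cotw} pins down. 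This is the right argument and it buys a complete proof of the ``covers the entire 2d family'' clause, which the paper only asserts. One small point worth flagging: Proposition~\ref{prop:cotw} fixes a sign, $X_3=[0,-c\delta_1/b]$, which corresponds to a choice of which focus carries $\Omega_1$; triangles with the opposite assignment are obtained by reflecting in the $x$-axis and have circumcenter $[0,+c\delta_2/\sqrt{b^2-\lambda}]$. If ``Brocard points on $[\pm c,0]$'' is read as an unordered condition, your uniqueness claim for $\mathcal{C}_T$ should be stated up to this reflection; this does not affect the substance of the argument, and the paper glosses over the same point.
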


\begin{proof}
Follows from Proposition \ref{prop:cotw}.
\end{proof}

An alternative, synthetic method for constructing the same fixed-Brocard 2d family was kindly contributed by Peter Moses \cite{moses2020-private-brocard} appears in Appendix~\ref{app:moses-fixed}.

As a curiosity, Appendix~\ref{app:broc-circles} lists some circles whose centers and radii are invariant over the Brocard poristic family.


\section{Related by Similarity: Homothetic and Brocard-Poristic Families}
\label{sec:similarity}
The homothetic and Brocard-poristic Poncelet families turn out to be images of each other under a variable similarity transform. The fact that it preserves angles is consistent with both families being equibrocardal.

To prove this we only need to show that either (i) the Brocard inellipse in the homothetic pair or (ii) the Steiner circumellipse in the Brocard porism have invariant aspect ratio. In fact, both must be true.

Referring to Figure~\ref{fig:homot-broc-inell}:

\begin{lemma}
Over the homothetic pair, the Brocard inellipse has invariant semi-axes ratio $\beta$ given by
\[\beta=
\frac{\sqrt{3a^4+10a^2b^2+3b^4}}{4ab} > 1\]
\label{lem:beta}
\end{lemma}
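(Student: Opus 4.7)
The plan is to parametrize the homothetic family by a single angle $t$, compute the Brocard inellipse's semi-axes as functions of $t$, and reduce invariance of the ratio $\beta$ to a trigonometric identity forced by the $\mathbb{Z}_3$-symmetry of the configuration.

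Since the family is the affine image under $(x,y) \mapsto (ax, by)$ of equilaterals inscribed in the unit circle, a natural parameter is $t \in [0, 2\pi/3)$ with vertices
\[P_k(t) = \bigl(a\cos(t + 2\pi k/3),\, b\sin(t + 2\pi k/3)\bigr),\quad k=0,1,2.\]
From Theorem~\ref{thm:sum-cots} we already know $A$, $\sum s_i^2$, and hence $\omega$ are constant, while the circumradius $R(t)$ varies. By Shail's formula the focal distance $c_B(t) = R(t)\sin\omega\sqrt{1-4\sin^2\omega}$ of the Brocard inellipse also varies in $t$, so it is really the ratio $\beta = a_B/b_B$, not the semi-axes themselves, that must be shown constant.

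The core computation is to obtain $a_B(t)$ and hence $b_B(t) = \sqrt{a_B(t)^2 - c_B(t)^2}$ in closed form. The cleanest route is via the inconic equation: the Brocard inellipse is the unique inellipse with perspector $X_6$, whose barycentric equation is classical. Translating to Cartesian coordinates centered at $X_{39}(t) = (\Omega_1(t) + \Omega_2(t))/2$ gives the ellipse's symmetric matrix $M(t)$, from which $a_B^2 + b_B^2 = \operatorname{tr} M(t)^{-1}$ and $a_B^2\, b_B^2 = \det M(t)^{-1}$. The target identity $(\beta + 1/\beta)^2 = (a_B^2 + b_B^2)^2/(a_B^2\, b_B^2)$ is a rational combination of these and must be shown $t$-independent. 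Alternatively one may bypass the perspector picture and extract $a_B(t)$ directly from the focal reflection property, namely $|\sigma_\ell(\Omega_1)-\Omega_2| = 2a_B$ where $\sigma_\ell$ is reflection across any side $\ell$ of the triangle.

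The final step is an algebraic simplification whose success is structural: since $\operatorname{tr} M^{-1}$ and $\det M^{-1}$ are intrinsic invariants of the triangle and thus symmetric in its three vertices, and since $t \mapsto t + 2\pi/3$ merely permutes the $P_k(t)$, the two quantities are $\mathbb{Z}_3$-invariant; their oscillating low-degree components in $t$ cancel by the same averaging mechanism Tabachnikov exploits in the lemma above. A computer-algebra check is the quickest verification, and evaluating at the symmetric point $t = 0$ (an isoceles triangle about the $y$-axis) gives a convenient closed-form substitution matching $\sqrt{3a^4+10a^2b^2+3b^4}/(4ab)$. The main obstacle is the size of the intermediate expressions: the Brocard inellipse's inconic coefficients are quartic in the side lengths, and combining these with the $R(t)$-dependent focal term in $c_B(t)^2$ produces lengthy rational trigonometric polynomials. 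The qualitative reason the $t$-dependence must drop out is the same as for the preceding invariants; making this manifest in the bookkeeping is where the bulk of the work lies.
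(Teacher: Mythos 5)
Your proposal follows essentially the same route as the paper: parametrize the vertices as $\bigl(a\cos(t+2\pi k/3),\,b\sin(t+2\pi k/3)\bigr)$, express the Brocard inellipse's semi-axes through standard closed-form data (the paper plugs the sidelengths into MathWorld's explicit semi-axis formulas, you extract $\mathrm{tr}\,M^{-1}$ and $\det M^{-1}$ from the $X_6$-inconic), and verify $t$-independence of the ratio by CAS. The one caveat is that your $\mathbb{Z}_3$-averaging heuristic is not itself a proof of constancy --- it cannot exclude harmonics in $3t$, which genuinely occur for this family (e.g.\ the locus of $X_{39}$ involves $\cos 3t$ and $\sin 3t$) --- but since you explicitly defer the actual verification to the computer-algebra check, this does not damage the argument.
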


\begin{proof}
The semi-axes of the Brocard inellipse of a triangle $ABC$ are given explicitly in terms of sidelengths $a,b,c$ in \cite[Brocard Inellipse, Eqns. 3,4]{mw}. Combining it with the expression for $\sin\omega$ in \cite[Eqn. 7]{mw}, obtain:

\[ \frac{4\Delta}{\sqrt{\Gamma}}=2\sin\omega,\;\; \Gamma=a^2b^2+a^2c^2+b^2c^2.\]

Where $\Delta$ is the triangle area. Applying this result to the vertices of 3-periodics in the homothetic pair (see Appendix~\ref{app:brocard-vertices}), the result follows.
\end{proof}

\begin{figure}
    \centering
    \includegraphics[width=\textwidth]{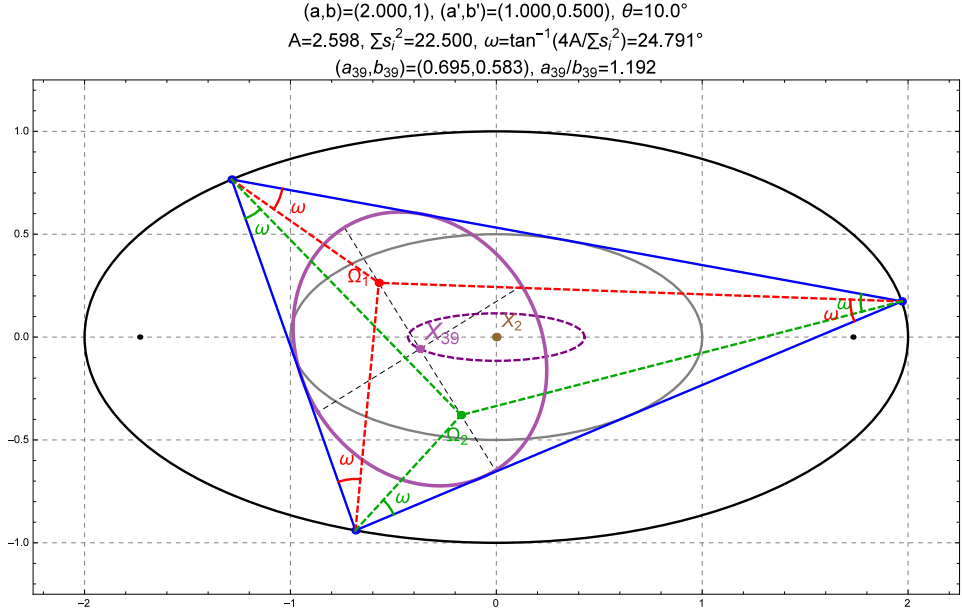}
    \caption{The center of the Brocard inellipse (purple) is $X_{39}$. Over the homothetic family, its locus (dashed purple) is an ellipse concentric and axis-aligned with the pair (see Appendix~\ref{app:x2-x39}). Over the family the axes of the Brocard inellipse are variable, however its aspect ratio is constant. Therefore this family can be regarded as the image of the Brocard-poristic family under a variable similarity transform. \href{https://youtu.be/DIm2qTxGWXE}{Video}}
    \label{fig:homot-broc-inell}
\end{figure}

Referring to Figure~\ref{fig:broc-steiner}:

\begin{lemma}
Over the Brocard porism, the Steiner circumellipse has invariant semi-axes ratio $\sigma$ given by
 
\[\sigma^2=\frac{8a^2-5b^2+4\sqrt{4a^4-5a^2b^2+b^4}}{3b^2}  \]
 
\label{lem:sigma}
\end{lemma}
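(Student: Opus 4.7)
The plan is to reduce $\sigma$ to a function of the Brocard angle alone; since $\omega$ is invariant over the porism by Proposition~\ref{prop:cotw}, so is $\sigma$, and the explicit closed form in $(a,b)$ will drop out by substituting $\cot^2\omega=(4a^2-b^2)/b^2$.

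Recall the standard formulas for the Steiner circumellipse of a triangle with sidelengths $s_1,s_2,s_3$: the semi-axes are $\tfrac{1}{3}\sqrt{\Sigma\pm 2Z}$, where $\Sigma=\sum_i s_i^2$ and $Z^2=\Sigma^2-3\sum_{i<j}s_i^2 s_j^2$ \cite[Steiner Circumellipse]{mw}. Hence $\sigma^2=(\Sigma+2Z)/(\Sigma-2Z)$, and it suffices to show that $(Z/\Sigma)^2$ depends only on $\omega$. The crux will be to establish the clean identity
\[\sum_{i<j}s_i^2 s_j^2 \;=\; 4\Delta^2\csc^2\omega,\]
where $\Delta$ is the triangle area. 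To obtain it, I would write $\sum_{i<j}s_i^2 s_j^2 = s_1^2 s_2^2 s_3^2\sum_i 1/s_i^2$ and invoke the law of sines $s_i=2R\sin\theta_i$ together with $\Delta=s_1 s_2 s_3/(4R)$; what remains is to show $\sum_i\csc^2\theta_i=\csc^2\omega$. This in turn follows from the elementary identity $\sum_{i<j}\cot\theta_i\cot\theta_j=1$, valid since $\theta_1+\theta_2+\theta_3=\pi$: squaring $\sum_i\cot\theta_i=\cot\omega$ yields $\sum_i\cot^2\theta_i=\cot^2\omega-2$, whence $\sum_i\csc^2\theta_i=3+\sum_i\cot^2\theta_i=1+\cot^2\omega=\csc^2\omega$.

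Combining this with $\Sigma=4\Delta\cot\omega$ produces $Z^2=16\Delta^2\cot^2\omega-12\Delta^2\csc^2\omega=4\Delta^2(\cot^2\omega-3)$, so
\[\Bigl(\frac{2Z}{\Sigma}\Bigr)^{\!2}\;=\;\frac{\cot^2\omega-3}{\cot^2\omega},\]
manifestly a function of $\omega$ alone, which already proves the invariance of $\sigma$. For the explicit closed form, I would substitute $\cot^2\omega=(4a^2-b^2)/b^2$ into $\sigma^2=(1+k)^2/(1-k^2)$ with $k=2Z/\Sigma$; expanding $(1-k^2)=3b^2/(4a^2-b^2)$ and $(1+k)^2$, together with $(a^2-b^2)(4a^2-b^2)=4a^4-5a^2 b^2+b^4$, collapses everything to the stated expression. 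The main obstacle is recognizing and assembling the identity $\sum_{i<j}s_i^2 s_j^2=4\Delta^2\csc^2\omega$; once that is in hand, the remainder is routine algebra and contains no dependence on the porism parameter.
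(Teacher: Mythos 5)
Your proof is correct, and it takes a genuinely different route from the paper's. The paper proceeds computationally: it fits the conic through the explicit vertex parametrization $P_1(t),P_2(t),P_3(t)$ of Appendix~\ref{app:brocard-vertices}, extracts the semi-axes ratio from the Hessian of the implicit equation, and verifies $t$-independence with a CAS. You instead start from the classical closed form for the Steiner circumellipse semi-axes $\tfrac13\sqrt{\Sigma\pm 2Z}$ and reduce everything to the two symmetric functions $\Sigma=\sum s_i^2$ and $\Gamma=\sum_{i<j}s_i^2s_j^2$, which you express as $4\Delta\cot\omega$ and $4\Delta^2\csc^2\omega$ respectively — the latter being precisely the identity the paper itself invokes in the proof of Lemma~\ref{lem:beta}, and which you rederive cleanly from $\sum_{i<j}\cot\theta_i\cot\theta_j=1$. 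All the intermediate identities check out ($\sum\csc^2\theta_i=\csc^2\omega$, $Z^2=4\Delta^2(\cot^2\omega-3)$, $1-k^2=3b^2/(4a^2-b^2)$), and the final substitution via $\cot^2\omega=(4a^2-b^2)/b^2$ from Proposition~\ref{prop:cotw} does reproduce the stated expression. What your approach buys is substantial: it is CAS-free, it never touches the unwieldy vertex formulas, and it actually proves the stronger statement that the Steiner circumellipse aspect ratio of \emph{any} triangle is the explicit function $\bigl((\cot\omega+\sqrt{\cot^2\omega-3}\,)/(\cot\omega-\sqrt{\cot^2\omega-3}\,)\bigr)^{1/2}$ of its Brocard angle alone — so invariance over any equibrocardal family, not just the Brocard porism, is immediate. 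The paper's method is less illuminating but requires no prior formula for the Steiner semi-axes and doubles as a check on the vertex expressions in the appendix.
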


\begin{proof} 
The Steiner circumellipse $\mathcal{S}(t)$ is centered in $X_2$ and passes through the vertices $P_1(t), P_2(t),P_3(t)$ (see Appendix~\ref{app:brocard-vertices}. With a computer-aided algebra system (CAS) we obtain the following implicit equation for it:

\[ \mathcal{S}(x,y)=a_{20}(t) x^2+2 a_{11}(t) {x}{y}+a_{02}(t)y^2+a_{10}(t) x+a_{01}(t)y+a_{00}(t)=0.\]

The ratio of its semi-axes is obtained from its Hessian matrix $H$ as follows \cite{strang2006-linear}: 

\[ \sigma(t)=\frac{ \mathrm{tr}\hspace{1pt}(H)+\sqrt{\mathrm{tr}\hspace{1pt}(H)^2-4\det(H)}}{2\,\det(H)},\]

Using a CAS, verify $\sigma$ is independent of $t$ and equal to the expression in the claim.
\end{proof}

\begin{figure}
    \centering
    \includegraphics[width=\textwidth]{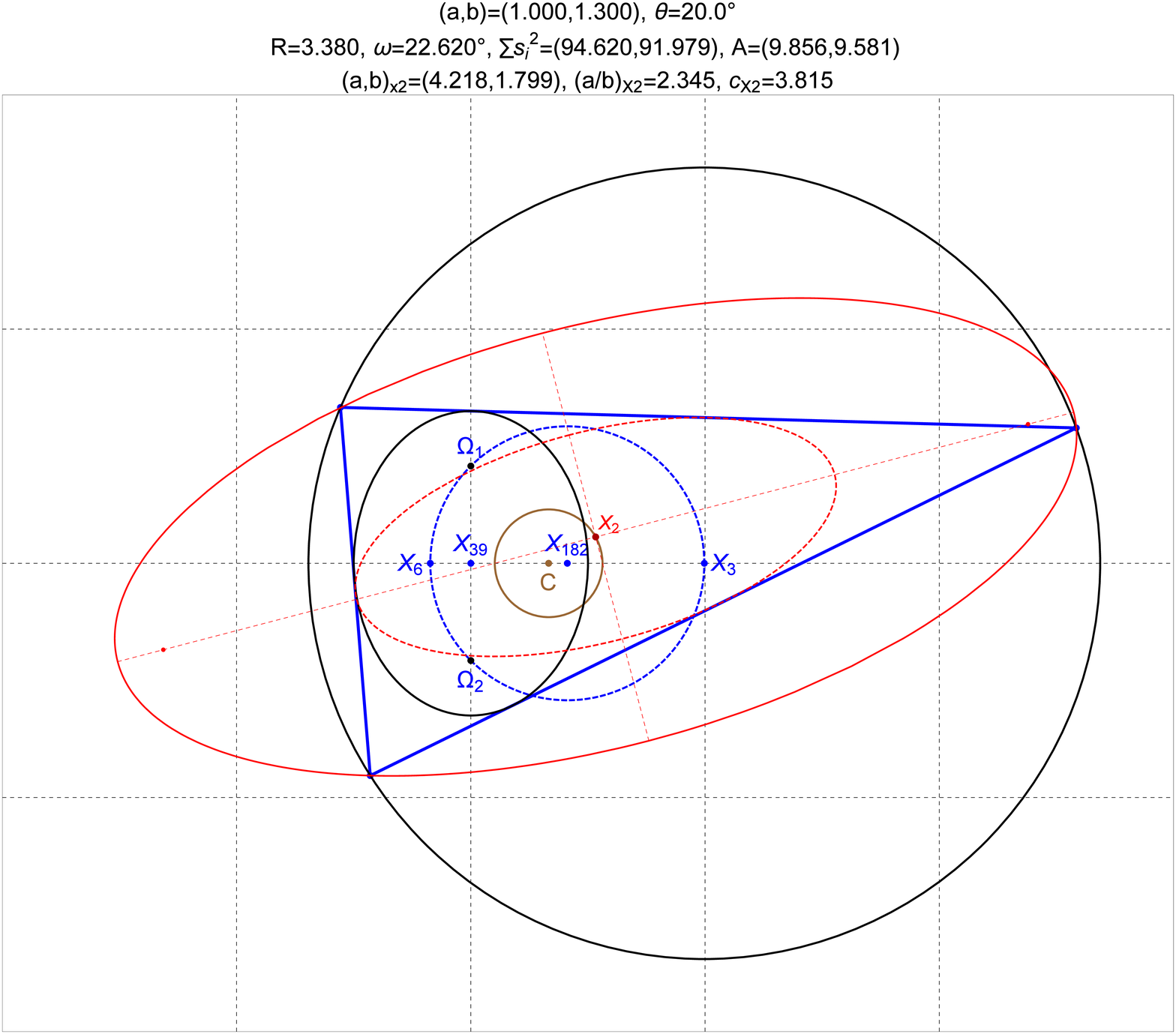}
    \caption{The Steiner Circumellipse (red) and Inellipse (dashed red) for a 3-periodic in the Brocard porism (blue) is shown. Both ellipses are by definition centered on $X_2$ whose locus is a circle centered on $C$ (see Appendix~\ref{app:x2-x39}. Over the family, the lengths of their axes are variable, however their ratio is invariant. Therefore Brocard-poristic 3-periodics can be regarded as the image under a variable similarity transform of the homothetic family. Note this is consistent with both systems being equibrocardal. \href{https://youtu.be/h3GZz7pcJp0}{Video}}
    \label{fig:broc-steiner}
\end{figure}

\begin{theorem}
The 3-periodic family in a homothetic pair is similar to 3-periodics arising from a 1d family of Brocard porisms. Conversely, the 3-periodic family in a Brocard porism pair is similar to 3-periodics arising in a 1d family of homothetic pairs.
\end{theorem}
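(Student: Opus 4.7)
The plan is to exploit Lemmas~\ref{lem:beta} and~\ref{lem:sigma} directly and in parallel. Each family carries a canonical auxiliary ellipse (the Brocard inellipse for the homothetic pair, the Steiner circumellipse for the Brocard porism), and in each case this ellipse has constant aspect ratio along the family. Since any two ellipses of equal aspect ratio differ by a direct similarity, these two lemmas already encode essentially all the information needed; the proof then reduces to verifying that the natural similarity built from this observation transports one Poncelet structure onto the other.

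For the forward direction I would fix a reference ellipse $E_0$ of aspect ratio $\beta$, centered at the origin and axis-aligned, together with the circumcircle $C_0$ that Proposition~\ref{prop:cotw} associates to it. For each 3-periodic $T(t)$ in the homothetic pair, its Brocard inellipse $E_B(t)$ has aspect ratio $\beta$ by Lemma~\ref{lem:beta}, so I define $S_t$ to be a direct similarity sending $E_B(t)$ onto $E_0$ (unique up to the discrete symmetries of $E_0$). Because the Brocard inellipse is characterized via triangle centers ($X_{39}$ as center, $X_6$ as Brianchon point) whose construction commutes with similarities, the Brocard inellipse of the image triangle $S_t(T(t))$ is precisely $E_0$. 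Consequently $S_t(T(t))$ belongs to the Brocard porism cut out by $(C_0,E_0)$, as desired.

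The converse direction proceeds by the mirror argument using Lemma~\ref{lem:sigma}. Fix a reference homothetic pair whose outer ellipse has aspect ratio $\sigma$; for a 3-periodic $T'(t)$ in the Brocard porism, let $\mathcal{S}(t)$ denote its Steiner circumellipse, which has aspect ratio $\sigma$ by Lemma~\ref{lem:sigma}, and let $\widetilde{S}_t$ be a similarity sending $\mathcal{S}(t)$ onto the reference outer ellipse. Since the Steiner inellipse is by definition the half-scaled concentric image of the Steiner circumellipse---a similarity-equivariant relation---the image $\widetilde{S}_t(T'(t))$ is inscribed in the reference outer ellipse and circumscribed about its half-sized concentric axis-aligned copy, and hence lies in the reference homothetic pair.

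The main obstacle I anticipate is the one uniqueness step embedded in the forward argument: why does ``Brocard inellipse equals $E_0$'' suffice to place the image triangle into the specific Brocard porism $(C_0,E_0)$, as opposed to some other Poncelet configuration with inellipse $E_0$? The answer is precisely~\eqref{eqn:broc-circumcircle} of Proposition~\ref{prop:cotw}: the circumcenter and circumradius of any triangle whose Brocard inellipse is $E_0$ are prescribed functions of the semi-axes of $E_0$ alone, so the circumcircle of $S_t(T(t))$ must coincide with $C_0$. On the converse side the analogous step is immediate, since the Steiner inellipse of any triangle is literally the half-scaled copy of its Steiner circumellipse. Once these points are granted, the variable similarities $t\mapsto S_t$ and $t\mapsto \widetilde{S}_t$ realize the claimed relation, and the equality of Brocard angles across both families reappears as a consistency check, since similarities preserve angles.
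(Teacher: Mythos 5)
Your proposal is correct and follows essentially the same route as the paper: you normalize the moving auxiliary ellipse (Brocard inellipse, resp.\ Steiner circumellipse) by a variable similarity, invoke Lemma~\ref{lem:beta} (resp.\ Lemma~\ref{lem:sigma}) for the constant aspect ratio, and use Proposition~\ref{prop:cotw} to pin down the circumcircle of the image triangle. The paper simply writes the similarity out explicitly as $Scale(k/b_{39})\cdot Rot(-\theta)\cdot Transl(-X_{39})$ and parametrizes the resulting 1d family by the scale constant $k$, whereas you phrase the same steps abstractly via equivariance of the triangle-center constructions.
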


\begin{proof}
Consider the transformation 
\[X'=Scale(k/b_{39}).Rot(-\theta).Transl(-X_{39}).X\].

\noindent where $k$ is a chosen constant, $b_{39}$ is the variable minor semi-axis length of the the (moving) Brocard inellipse in the homothetic pair, $\theta$ the angle between said minor axis and the horizontal, and $X_{39}$ is the moving center of the inellipse. Clearly, the transformation will take the moving Brocard inellipse to an origin-centered, upright one. By Lemma~\ref{lem:beta}, the ratio $\beta$ of inellipse semi-axes $a_{39}/b_{39}$ is constant, implying the transformed inellipse will have fixed axes $(k\beta, k)$. Notice its circumcenter and circumradius are prescribed by the semi-axes of the caustic (Equation~\ref{eqn:broc-circumcircle}). So the homothetic family will be mapped to a 1-parameter family of Brocard porisms where the parameter is $k$. 
\end{proof}

The reverse argument can be used to transform the moving Steiner circumellipse in the Brocard porism to an origin centered, stationary one with axes $(k'\sigma,k')$, namely:

\[X=Scale(k'/b_{2}').Rot(-\theta').Transl(-X_{2}').X'\]

Where primed (unprimed) quantities refer to those measured in the Brocard porism (resp. homothetic pair). See Figures~\ref{fig:homot-broc-inell} and \ref{fig:broc-steiner} and the videos mentioned in the captions.


	

\section{Conclusion}
\label{sec:conclusion}
Previously we had shown the similar, concentric doppelgänger to the poristic triangle family (fixed incircle and circumrcircle) were 3-periodics in the confocal pair (elliptic billiard) \cite{garcia2020-poristic}, and that both conserved the sum of cosines. Later we proved the sum of cosines is conserved by the confocal pair for all $N$ \cite{reznik2020-intelligencer,akopyan2020-invariants}.

Here we make a similar argument: the concentric doppelgänger to 3-periodics in the non-concentric Brocard porism is the concentric, axis, aligned homothetic family. Both conserve the Brocard angle and therefore the sum of cotangents. Suprisingly, homothetic N-periodics conserve the sum of cotangents for all $N$ \cite{reznik2020-percolating}.

These considerations are summarized on Table~\ref{tab:concentric-duals}.

The following are questions:

\begin{itemize}
    \item are there other (non-concentric,concentric) duos related by similarity? What common quantity do they conserve?
    \item must the outer conic of the non-concentric pair be a circle so a concentric, similar doppelgänger can be found?
    \item How does this similar duality relate to transformations mentioned in \cite{dragovic11} which, under a suitable ambient ($CP^2$) a transformation exists which takes any non-concentric, non-axis-aligned conic pair into a confocal, canonical one?
\end{itemize}

\begin{table}
\centering
\begin{tabular}{|c|c|c|}
\hline
Concentric &
\makecell[c]{Common\\Invariants} & Non-Concentric \\
        \hline
        \makecell[t]{\textbf{Confocal} (Billiard)
        \\$a_9$, $b_9$, $X_9$, $\sum{s_i}$} &
        \makecell[t]{$r/R$\\ $\sum\cos{\theta_i}$} & \makecell[t]{\textbf{Chappple Porism}
        \\
        $r$, $R$, $X_1,X_3,\ldots$\\$a_9/b_9$} \\
        \hline
        \makecell[t]{\textbf{Homothetic} \\
        $a_2$, $b_2$, $X_2$, $\sum{s_i}^2$\\$a_{39}$,$b_{39}$} &
        \makecell[t]{$\omega$\\ $\sum\cot{\theta_i}$} & \makecell[t]{\textbf{Brocard Porism}\\
        $a_{39}$, $b_{39}$, $X_3,X_6,X_{39},X_{182},\ldots$\\
        $a_2/b_2$}\\
        \hline
    \end{tabular}
    \caption{\textbf{Left column}: invariants held in two concentric systems: confocal and homothetic. \textbf{Right column}: invariants held by their non-concentric, similar counterparts: Chapple- and Brocard-poristic. \textbf{Middle column}: invariants which hold on both the concentric and non-concentric counterpart. The confocal-Chapple similarity was studied in \cite{garcia2020-poristic}.}
    \label{tab:concentric-duals}
\end{table}

A table of animations of some of the results above appears below.

\begin{table}[H]
\begin{tabular}{|c|c|l|}
\hline
Exp & Video & Title \\
\hline
01 & \href{https://youtu.be/8hkeksAsx0E}{*} & Family of 3-Periodics in Five Poncelet Pairs \\
02 & 
\href{https://youtu.be/2fvGd8wioZY}{*} &  
\makecell[lt]{Poncelet 3-periodics in the Homothetic Pair\\conserve Brocard angle} \\
03 & \href{https://youtu.be/JANPPLET0so}{*}  & \makecell[lt]{Brocard-Poncelet Porism with stationary\\Brocard Points and invariant Brocard Angle} \\
04 & \href{https://youtu.be/h3GZz7pcJp0}{*} &  \makecell[lt]{Brocard Porism and Invariant Aspect Ratio\\ Steiner Circumellipse} \\
05 & \href{https://youtu.be/DIm2qTxGWXE}{*} &  \makecell[lt]{Homothetic Pair and Invariant Aspect Ratio\\ Brocard Inellipse} \\

\hline
\end{tabular}
\caption{Experimental animations. Click on the * to see it as a {YouTube} video and/or a browser-based simulation.}
\label{tab:videos}
\end{table}

\noindent We would like to thank A. Akopyan who originally challenged us to find an equibrocardal Poncelet pair. S. Tabachnikov  contributed the proof for invariant squared sidelengths on the homothetic pair applicable to all $N$. P. Moses contributed several insights, as well as the geometric construction for the family with fixed Brocard points as well as suggesting relevant references. L. Gheorghe pointed us to a relevant set of porism results by Bradley and provided valuable feedback on our videos. M. Helman has kindly assisted us with simulations and several insights. The first author is fellow of CNPq and coordinator of Project PRONEX/ CNPq/ FAPEG 2017 10 26 7000 508.

\appendix
\section{Brocard Porism: Vertices of 3-periodics}
\label{app:brocard-vertices}
Let the triangle vertices be $P_i(t),i=1,2,3$. Parametrize $P_1(t)=X_3+R[\cos{t},\sin(t)]$. 

\begin{align*}
    P_1(t)=& [ {\frac {2{a}^{2}}{b}}\cos t ,-{\frac {c\delta_1}{b}}+
 {\frac {2{a}^{2}\sin t }{b}}]\\
P_2(t)=&[\frac{p_{2,x}}{q}, \frac{p_{2,y}}{q}]\\
P_3(t)=&[\frac{p_{3,x}}{q}, \frac{p_{3,y}}{q}]\\
p_{2,x}=&-4\,{a}^{2}b{c}^{2} \left( 4\,{a}^{4}-3\,{a}^{2}{b}^{2}+{b}^{4}
 \right)  \cos^3t-8\,{a}^{4}b{c}^{3
}\delta_1\,\sin t  \cos^3 t \\ -&2\,\Delta\,{a}^{2}c\delta_1\, \left( 2\,{a}^{4}-2\,a^{2}b^{2}+b^{4} \right)  \cos^2t 
- 4
\,\Delta\,{a}^{4}{c}^{2} \left( 2\,{a}^{2}-{b}^{2} \right) \sin
 t  \cos^2t\\
 +&{a}^{2}b^{3} \left( 4\,{a}^{4}-7\,{a}^{2}{b}^{2}+2\,{b}^{4} \right) \cos t  
 + 2\,{a}^{2}{b}^{3}{c}^{3}\delta_1\,\sin t \cos t \\
 +&{a}^{4}\Delta\,{b}^{2} \left( 2\,{a}^{
2}-{b}^{2} \right) \sin t +{a}^{4}{b}^{2}c
\delta_1\Delta
\\
p_{2,y}=&8\,{a}^{4}b{c}^{3}\delta_1\, \cos^4t
+4\,\Delta\,{a}^{4}{b}^{2}{c}^{2}  \cos^3 t 
 -2\,{c}^{2}\delta_1\,{b}^{3}c \left( 5\,{a}^{2}-2\,{b}^{2}
 \right)  \cos^2t\\
 -&4\,{b}^{3}{a}^{2}
{c}^{2} \left( 3\,{a}^{2}-{b}^{2} \right) \sin t 
 \cos^2t-{b}^{2}\Delta\,{a}^{2}
 \left( 8\,{a}^{4}-9\,{a}^{2}{b}^{2}+2\,{b}^{4} \right) \cos t\\
 -&2\,{b}^{2}\Delta\,{a}^{2}c\delta_1\, \left( 2\,{a}^{2}-{b}^{2
} \right) \sin t \cos t -\sin \left( t
 \right) {a}^{4}{b}^{5}
\\
p_{3,x}=&-4\,{a}^{2}b{c}^{2} \left( 4\,{a}^{4}-3\,{a}^{2}{b}^{2}+{b}^{4}
 \right)  \cos^3t-8\,{a}^{4}b{c}^{3
}\delta_1\,\sin t   \cos^3 t \\
 -&2\,\Delta\,{a}^{2}c\delta_1\, \left( 2\,{a}^{4}-2\,{a}^{2
}{b}^{2}+{b}^{4} \right)  \cos^2t\\
-&4
\,\Delta\,a^{4}c^{2} \left( 2\,{a}^{2}-{b}^{2} \right) \sin
 t  \cos^2t\\
 +&a^{2}b^{3} \left( 4\,{a}^{4}-7\,{a}^{2}{b}^{2}+2\,{b}^{4} \right) \cos t  
 + 2\,{a}^{2}{b}^{3}{c}^{3}\delta_1\,\sin t \cos t \\
 +&{a}^{4}\Delta\,{b}^{2} \left( 2\,{a}^{
2}-{b}^{2} \right) \sin t +\Delta\,{a}^{4}{b}^{2}c
\delta_1
\\
p_{3,y}=&8\,{a}^{4}b{c}^{3}\delta_1\, \cos^4t
+4\,\Delta\,{a}^{4}{b}^{2}{c}^{2}  \cos^3 t 
 -2\,{c}^{3}\delta_1\,{b}^{3} \left( 5\,{a}^{2}-2\,{b}^{2}
 \right)  \cos^2t\\
 -&4\,a^2{b}^{3} 
{c}^{2} \left( 3\,{a}^{2}-{b}^{2} \right) \sin t 
 \cos^2t-{a}^{2}{b}^{2} \Delta\left( 4\,
 {a}^{2}c\delta_1 
- 2\, {b}^{2}c\delta_1 \right) \sin
 t \cos t\\
 -&{a}^{2}{b}^{2} \Delta \left( 8\,
 \,{a}^{4}-9\, \,a^{2}b^{2}+2\, {b}^{4} \right) 
\cos t -\sin t a^{4}b^{5}
\\
q=& 16\,a^{4}c^{4} \cos^4t-4\,b^{
2}c^{2} \left( 2\,{a}^{4}-{b}^{2}\delta_1^{2} \right)  
  \cos^2 t  +{a}^{4}{b}^{4}
\\
\end{align*}





\section{Peter Moses' Construction for Fixed Brocard Families}
\label{app:moses-fixed}
The following construction for the family of triangles with fixed Brocard Points was kindly contributed by Peter Moses \cite{moses2020-private-brocard}.

Given fixed Brocard Points $\Omega_1$ and $\Omega_2$ and vertex $A$ of triangle $ABC$, compute $B,C$ as follows:

\begin{itemize}
\item $O_a$ is the circumcenter of $A{\Omega_1}{\Omega_2}$.
\item $A'$ is the reflection of $A$ in the midpoint of ${\Omega_1}{\Omega_2}$.
\item The perpendicular through $A'$ to $A{O_a}$ intersects $A{\Omega_1}$ at $P$, $A{\Omega_2}$ at $Q$ and $\Omega_1{\Omega_2}$ at $R$.
\item $A''$ is the reflection of $A$ in the midpoint of $Q{\Omega_1}$.
\item $A'''$ is the reflection of $A$ in the midpoint of $P{\Omega_2}$.
\item Intersect the circle through $A''$ centered at $Q$ with the conic ${A}{\Omega_1}{R}{Q}{A''}$ at $A''$ and $C$.
\item Intersect the circle through $A'''$ centered at $P$ with the conic ${A}{\Omega_2}{R}{P}{A'''}$ at $A'''$ and $B$.
\end{itemize}

Claim: $ABC$ has Brocard points $\Omega_1$ and $\Omega_2$.


\section{Loci of Homothetic X(39)  Brocard-poristic X(2)}
\label{app:x2-x39}
\subsection{Homothetic pair: elliptic locus of X(39)}

Recall the center of the Brocard inellipse is $X_{39}$ \cite[Brocard Inellipse]{mw}.

\begin{proposition}
Over the homothetic pair, the locus of $X_{39}$ is an ellipse concentric and axis-aligned with the pair with semi-axes:

\begin{equation*}
(a_{39},b_{39}) = \frac{c^2}{2}\left(\frac{a}{a^2 + 3 b^2}, \frac{b}{3 a^2 + b^2}\right)
\end{equation*}
\end{proposition}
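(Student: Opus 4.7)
The plan is to parametrize the homothetic 3-periodics explicitly, compute $X_{39}(t)$ from its known barycentric coordinates $\{s_i^2(s_j^2 + s_k^2)\}$, and extract the elliptic locus in closed form. Writing the vertices as the affine image of rotating equilaterals in a concentric circle,
\[ P_k(t) = (a\cos\alpha_k,\ b\sin\alpha_k), \qquad \alpha_k = t + \tfrac{2\pi k}{3},\quad k=0,1,2, \]
a sum-to-product identity applied to $P_{i+1}(t)-P_{i-1}(t)$ gives the sidelength opposite $P_i$ as the degree-$2$ trig polynomial
\[ s_i^2 = 3\bigl(a^2\sin^2\alpha_i+b^2\cos^2\alpha_i\bigr) = \tfrac{3}{2}\bigl((a^2+b^2) - c^2\cos 2\alpha_i\bigr), \]
with $c^2 = a^2-b^2$. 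The $\mathbb{Z}_3$-averaging trick used already in the invariant-sidelength lemma recovers $\sum_i s_i^2 = \tfrac{9}{2}(a^2+b^2)$ as a by-product.

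Next, I would rewrite the barycentric weights as $u_i = s_i^2(s_{j}^2+s_k^2) = \tfrac{9(a^2+b^2)}{2}s_i^2 - s_i^4$, a trig polynomial of degree $\le 4$ in $\alpha_i$, so that
\[ X_{39}(t) = \frac{\sum_i u_i(t)\,P_i(t)}{\sum_i u_i(t)}. \]
Summation over $k=0,1,2$ annihilates every harmonic $e^{im\alpha_k}$ with $m\not\equiv 0\pmod{3}$. The expected outcome is that the denominator collapses to a $t$-independent constant and the numerator reduces to an expression of the form $(A\cos t,\,-B\sin t)$ for constants $A, B$ depending only on $a,b$, which directly exhibits the locus as an origin-centered, axis-aligned ellipse with semi-axes $(|A|,|B|)$.

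The main obstacle is the algebraic bookkeeping: expanding $s_i^4$ and the products $u_i P_i$ yields trig polynomials of degree up to $5$, and although the $\mathbb{Z}_3$-symmetry forces massive cancellation, the final identification $(a_{39},b_{39}) = \tfrac{c^2}{2}\bigl(a/(a^2+3b^2),\ b/(3a^2+b^2)\bigr)$ is cleanest via a CAS. As a sanity check, the circular limit $a=b$ gives $c=0$ and collapses the locus to the origin, consistent with $X_{39}$ coinciding with the common center of equilateral 3-periodics in a circle.
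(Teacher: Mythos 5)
Your plan follows essentially the same route as the paper's own proof: parametrize the homothetic 3-periodics as $P_k(t)=(a\cos\alpha_k,b\sin\alpha_k)$, form the barycentric (equivalently trilinear) combination for $X_{39}$, and let symbolic computation collapse it to an axis-aligned ellipse; your $\mathbb{Z}_3$-averaging remark is a nice structural explanation of \emph{why} the denominator is constant, which the paper leaves implicit behind ``symbolic calculations.'' One slip: by your own annihilation principle the surviving harmonic in the numerator is $3t$, not $t$ --- the paper indeed obtains $X_{39}(t)=\left(-a_{39}\cos 3t,\,-b_{39}\sin 3t\right)$ --- though this does not affect the identified semi-axes or the conclusion.
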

\begin{proof} Consider the triangular orbit of the homothetic pair \[A=[a\cos t,b\sin t], B=[a\cos(t+\frac{2\pi}{3}),b\sin(t+\frac{2\pi}{3}) ], C=[a\cos(t+\frac{4\pi}{3}),b\sin(t+\frac{4\pi}{3}) ].\]
Using the trilinear coordinates of $X_{39}$ given by $[p:q:r]=[a(b^2+c^2): b(a^2+c^2): c(a^2+b^2)]$ it follows that
\[X_{39}(t)=\frac{p a A+qbB+rcC}{ap+bq+cr}, \;\; a=|B-C|, b=|A-C|, c=|A-B|.\]
Therefore, using symbolic calculations, it follows that
\[X_{39}(t)=\left[ -\frac{a(a^2-b^2)\cos(3t)}{ 2 (a^2+3b^2)},-\frac{ b(a^2-b^2)\sin(3t)}{ 2(3a^2+b^2)}\right]\cdot  \]
This ends the proof.
\end{proof}

\subsection{Brocard porism: circular locus of X(2)}

Recall the center of the Steiner circumellipse is $X_{2}$ [Steiner Circumellipse]\cite{mw}. The following was contributed by Peter Moses:

\begin{proposition}
Over the Brocard porism, the locus of $X_2$ is a circle centered on triangle center $X_{11171}$ and of radius $R(2\cos(2\omega)-1)/3$.
\end{proposition}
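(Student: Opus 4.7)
The strategy is direct computational verification, exploiting that the vertices $P_1(t), P_2(t), P_3(t)$ of the Brocard-poristic family are already given in closed form in Appendix~\ref{app:brocard-vertices}. The plan is to (i) form the centroid $X_2(t) = (P_1(t)+P_2(t)+P_3(t))/3$; (ii) locate a candidate stationary center $C$; (iii) show $|X_2(t)-C|$ is constant via a computer-aided identity check; and (iv) reduce the resulting constant to the stated formula.

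First I would build $X_2(t)$ with common denominator $3q(t)$, where $q(t)$ is the shared denominator from Appendix~\ref{app:brocard-vertices}, producing a rational trigonometric expression in $(\cos t,\sin t)$. To guess the correct center $C$, I would evaluate $X_2$ numerically at three generic values of $t$, fit a circle through the three points, and then identify the resulting center with $X_{11171}$ using its tabulated trilinears in \cite{etc}. This identification is legitimate because $X_{11171}$ is stationary on the Brocard porism, so its coordinates depend only on the inellipse semi-axes $a,b$ and can be written once and for all.

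Next I would verify circularity by clearing denominators and checking that
\[ q(t)^2\bigl(|X_2(t)-C|^2 - \rho^2\bigr) \]
vanishes identically as a polynomial in $(\cos t,\sin t)$ modulo $\cos^2 t+\sin^2 t - 1$. This is a routine CAS identity rather than a calculation to carry out by hand. The value $\rho$ read off from the simplification is then rewritten using the invariants in Proposition~\ref{prop:cotw}: from $R=2a^2/b$ and $\cot\omega=\sqrt{4a^2-b^2}/b$ one obtains $\sin^2\omega = b^2/(4a^2)$ and hence $\cos 2\omega = 1 - b^2/(2a^2)$, so
\[ \tfrac{R}{3}\bigl(2\cos 2\omega - 1\bigr) \;=\; \frac{2(a^2-b^2)}{3b} \;=\; \frac{2c^2}{3b}, \]
which must match the simplified $\rho$ for the proof to close.

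The main obstacle is purely algebraic: the expressions for $P_2, P_3$ in Appendix~\ref{app:brocard-vertices} contain several dozen monomials each, so $|X_2(t)-C|^2$ becomes an unwieldy rational trigonometric expression whose constancy in $t$ is not visible by inspection. The content of the proposition is precisely that all nontrivial $t$-dependence cancels; the delicate practical point is pinning down the correct center $C=X_{11171}$ before verification, since an incorrect guess would leave residual $t$-dependent terms and obscure the cancellation.
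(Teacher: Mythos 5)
Your plan is essentially the route the paper takes: the paper offers no written proof of this proposition (it is attributed to Peter Moses), but the explicit circle equation it records, centered at $[0,-\sqrt{4a^4-5a^2b^2+b^4}/(3b)]$ with radius $2(a^2-b^2)/(3b)$, is exactly what your CAS computation of $X_2(t)=(P_1+P_2+P_3)/3$ from Appendix~\ref{app:brocard-vertices} would produce, and your reduction $\tfrac{R}{3}(2\cos 2\omega-1)=2(a^2-b^2)/(3b)$ via $\sin^2\omega=b^2/(4a^2)$ is correct and matches. One small logical point: you justify identifying the fitted center with $X_{11171}$ by asserting that $X_{11171}$ is stationary over the porism, but that stationarity is itself part of what must be verified; in the same CAS pass you should check that the point with trilinears $2\cos A-\cos(A+2\omega)::$ computed from $P_1(t),P_2(t),P_3(t)$ is independent of $t$ and equals your fitted center $C$ (note $C=X_3/3$ here, since $\sqrt{4a^4-5a^2b^2+b^4}=c\,\delta_1$), rather than assuming it.
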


Note: Trilinear coordinates for $X_{11171}$ are $2\cos{A}-\cos(A + 2\omega) ::$ \cite[Part 6]{etc}. 

In addition, the following expressions for the $X_2$ locus center and radius have been derived in terms of $a$ and $b$:

\[ C_2(x,y)={x}^{2
}+{y}^{2}+  {\frac {2\sqrt {4\,{a}^{4}-5\,{a}^{2}{b}^{2}+{b}^{4}} }{3b}}y+\frac{1}{3}({a}^{2}- {b}^{2})=0\]
centered at $[0, - \,{\frac {\sqrt {4\,{a}^{4}-5\,{a}^{2}{b}^{2}+{b}^{4}}}{3b}}]$
and radius $2(a^2-b^2)/(3b)$.


The circular locus of $X_2$ can also be inferred from results in \cite{bradley2007-brocard}.

We leave it as an exercise:

\begin{remark}
The axes of the Steiner circumellipse and the circular locus of $X_2$ intersect the minor axis of the Brocard inellipse on the same locations.
\end{remark}

\section{Stationary Circles over the Brocard Porism}
\label{app:broc-circles}
Table~\ref{tab:stationary-circles} lists circles named in \cite{mw} which over the Brocard porism have stationary centers (indicated) and invariant radii (since they only depend on $R$ and $\omega$).

The coordinates for $X_{182}$ and radius $R_{182}$ of the Brocard circle in terms of $a,b$ are given by:

\[ X_{182}=[0, -c\frac{\sqrt{2a^2-b^2}}{b\sqrt{4a^2-b^2}}],\;\;\;R_{182}= \frac{2a^2c}{b\sqrt{4a^2-b^2}}\cdot \]

\renewcommand{\arraystretch}{0.95}
\begin{table}
\small
\begin{tabular}{|r|c|l|}
\hline
Circle & Center & Radius \\
\hline
Circumcircle & $X_3$ & $R$ \\
2nd Brocard & $X_3$ & $e R$ \\
Stammler & $X_3$ & $2 R$ \\
2nd Lemoine (Cosine$^\dagger$) & $X_6$ & $R\tan\omega$ \\
Brocard & $X_{39}$ & $e (R/2) \cos\omega$ \\
Gallatly & $X_{39}$ & $R\sin{\omega}$ \\
Half-Moses & $X_{39}$ & ${R}\sin^2\omega$ \\
Moses & $X_{39}$ & $2{R}\sin^2\omega$
\\
1st Lemoine & $X_{182}$ & $(R/2) \sec\omega$ \\
Lucas Inner & $X_{6407}$ & $R/(4\cot\omega+7)$ \\
\hline
\end{tabular}
\caption{Circles named in \cite{mw} which remain stationary over the Brocard porism. Let $e = \sqrt{1-4\sin^2{\omega}}$. $^\dagger$The Cosine Circle to the excentrals of 3-periodics in the elliptic billiard (confocal pair) is also stationary \cite{garcia2020-new-properties}.}
\label{tab:stationary-circles}
\end{table}



\bibliographystyle{maa}
\bibliography{references,authors_rgk}

\end{document}